\title{Structure-preserving discretization for port-Hamiltonian descriptor systems}
\author{
  Volker Mehrmann\footnote{
    Institut f\"ur Mathematik MA 4-5, TU Berlin, Str.~des 17.~Juni 136, D-10623 Berlin, FRG.
    \textit{Email address: }\texttt{mehrmann@math.tu-berlin.de}.}\; and
%     Supported by Deutsche Forschungsgemeinschaft via Project A2 within SFB 910 and by {\it Einstein Foundation Berlin} via the Einstein Center ECMath.}\; and 
  Riccardo Morandin\footnote{
    Institut f\"ur Mathematik MA 4-5, TU Berlin, Str.~des 17. Juni 136, D-10623 Berlin, FRG.
    \textit{Email address: }\texttt{morandin@math.tu-berlin.de}.}
%     Supported by Deutsche Forschungsgemeinschaft via Subproject B03 within SFB Transregio 154.}
}
\newcommand{\mB}[1]{\mathbb{#1}}
\newcommand{\mc}[1]{\mathcal{#1}}
\newcommand{\R}{\mB{R}}
\newcommand{\N}{\mB{N}}
\newcommand{\mat}[1]{\begin{matrix}#1\end{matrix}}
\newcommand{\bmat}[1]{\begin{bmatrix}#1\end{bmatrix}}
\newcommand{\pa}{\partial}
\newcommand{\dd}[2]{\frac{\td#1}{\td#2}}
\newcommand{\pd}[2]{\frac{\pa#1}{\pa#2}}
\newcommand{\dualp}[2]{\langle #1 \,|\, #2 \rangle}
\newcommand{\td}{\textup{d}}
\newcommand{\set}[1]{\left\{#1\right\}}
\newcommand{\pset}[1]{\left(#1\right)}
\newcommand{\bset}[1]{\left[#1\right]}
\newcommand{\bilp}[1]{\llangle #1 \rrangle}
\newcommand{\fc}[2]{\mc C^{#1}(\mc S,#2)}
\newcommand\fc*[1]{\mc C(\mc S,#1)}
\newcommand\independent{\protect\mathpalette{\protect\independenT}{\perp}}
\def\independenT#1#2{\mathrel{\rlap{$#1#2$}\mkern2mu{#1#2}}}
\newsavebox{\@brx}
\newcommand{\llangle}[1][]{\savebox{\@brx}{\(\m@th{#1\langle}\)}%
  \mathopen{\copy\@brx\kern-0.5\wd\@brx\usebox{\@brx}}}
\newcommand{\rrangle}[1][]{\savebox{\@brx}{\(\m@th{#1\rangle}\)}%
  \mathclose{\copy\@brx\kern-0.5\wd\@brx\usebox{\@brx}}}
\newtheorem{theorem}{Theorem}
\newtheorem{lemma}{Lemma}
\theoremstyle{definition}
\newtheorem{definition}{Definition}
\theoremstyle{remark}
\newtheorem{remark}{Remark}
\DeclareMathOperator{\diag}{diag}
\begin{document}

\maketitle
% \thispagestyle{empty}
% \pagestyle{empty}

%%%%%%%%%%%%%%%%%%%%%%%%%%%%%%%%%%%%%%%%%%%%%%%%%%%%%%%%%%%%%%%%%%%%%%%%%%%%%%%%
\begin{abstract}

We extend the modeling framework of port-Hamiltonian descriptor systems to include under-~and over-determined systems and arbitrary differentiable Hamiltonian functions.
This structure is associated with a Dirac structure that encloses its energy balance properties.
In particular, port-Hamiltonian systems are naturally passive and Lyapunov stable, because the Hamiltonian defines a Lyapunov function.
The explicit representation of input and dissipation in the structure make these systems particularly suitable for output feedback control.
It is shown that this structure is invariant under a wide class of nonlinear transformations, and that it can be naturally modularized, making it adequate for automated modeling.
We investigate then the application of time-discretization schemes to these systems and we show that, under certain assumptions on the Hamiltonian, structure preservation is achieved for some methods.
Relevant examples are provided.

\end{abstract}

%%%%%%%%%%%%%%%%%%%%%%%%%%%%%%%%%%%%%%%%%%%%%%%%%%%%%%%%%%%%%%%%%%%%%%%%%%%%%%%%
\section{Introduction}

The port-Hamiltonian (pH) structure \cite{SchJ14} is very appealing for modeling, simulation and control of complex multiphysics systems.
PH systems generalize classical Hamiltonian systems and gradient systems by including energy dissipation and interaction with the environment (represented by \emph{ports}).
Implicit formulations for pH systems enclosing their properties are given through objects from differential geometry, known as Dirac structures.
Similarly as pure Hamiltonian systems, pH systems also gain from the application of geometric integration methods \cite{HaiLW06}, like the Gauss-Legendre collocation schemes. With additional requirements imposed on the form of the Hamiltonian function, structure preseration can be achieved, by a proper discretization of the Dirac structure \cite{KotL18}.

The energy concept can be used as a common language, to interconnect different pH systems while mantaining the structure, even when they originate in different physical domains, that may include mechanical, mechatronic, fluidic, thermic, hydraulic, pneumatic, elastic, plastic or electric components.
The explicit incorporation of constraints, often inavoidable when using modeling packages such as \textsc{Modelica} (\verb|https://www.modelica.org|), \textsc{Matlab/Simulink} (\verb|https://www.mathworks.org|) or \textsc{Simpack} (\verb|http://www.simpack.com|), produce differential-algebraic equations (DAEs), also referred to as \emph{descriptor systems}, which may contain hidden constraints, consistency requirements for initial conditions and additional regularity requirements.
A definition for linear time-varying pH descriptor systems (pHDAEs) has been given in \cite{BeaMXZ18}.
While including many interesting examples, that description falls short of fully extending conventional pH systems, since it is restricted to quadratic Hamiltonian functions and finite dimensional states, where the dimension is the same as the number of equations, and the matrix coefficients are independent from state.

In this paper we extend the definition of pHDAEs to include arbitrary differentiable Hamiltonian functions and systems with a different number of states and equations. The new definition extends to the case of infinite-dimensional states and time- and space-varying coefficients.
We also give a new definition of Dirac structure, so that we can always associate one to our pHDAEs.
We generalize the results in \cite{KotL18} to the case of pHDAEs, so that we can apply structure-preserving time discretization to port-Hamiltonian descriptor systems.

The paper is organized as follows.
In Section II, we introduce our new definition of pHDAEs, and we investigate its properties.
In Section III, we study the application of collocation schemes to pHDAEs and conditions for which structure preservation is achieved.
In Section IV, we illustrate a simple example of a pHDAE from the electrical circuit domain, we exploit the pH structure to apply control to the system and we present numerical experiments using Gauss-Legendre collocation.

%%%%%%%%%%%%%%%%%%%%%%%%%%%%%%%%%%%%%%%%%%%%%%%%%%%%%%%%%%%%%%%%%%%%%%%%%%%%%%%%
\section{Port-Hamiltonian descriptor systems}

\subsection{Formulation}

\begin{definition}[pHDAE]
  Let us consider a time interval $\mB I\subseteq\R$, a state space $\mc X\subseteq\R^n$ and the extended state space $\mc S=\mB I\times\mc X$.
  A \emph{port-Hamiltonian descriptor system} (or \emph{pHDAE}) is a system of differential (-algebraic) equations of the form
  \begin{equation}\label{eq:pHDAE}
    \begin{split}
      E(t,x)\dot x + r(t,x) &= (J(t,x)-R(t,x))z(t,x) + (B(t,x)-P(t,x))u, \\
      y &= (B(t,x)+P(t,x))^Tz(t,x) + (S(t,x)-N(t,x))u,
    \end{split}
  \end{equation}
  associated with an \emph{Hamiltonian function} $\mc H\in\fc1\R$,
  where $x(t)\in\mc X$ is the state, $u(t),y(t)\in\R^m$ are the input and output,
  $E\in\fc*{\R^{\ell,n}}$ is the \emph{flow matrix},
  $r,z\in\fc*{\R^\ell}$ are the \emph{time-flow} and \emph{effort} functions,
  $J,R\in\fc*{\R^{\ell,\ell}}$ are the \emph{structure} and \emph{dissipation} matrices,
  $B,P\in\fc*{\R^{\ell,m}}$ are the \emph{port matrices} and
  $S,N\in\fc*{\R^{m,m}}$ are the \emph{feed-through matrices}.
  Furthermore, the following properties must hold:
  \begin{enumerate}
    \item The extended structure and dissipation matrices $\Gamma,W\in\fc*{\R^{\ell+m,\ell+m}}$, defined as
    \begin{equation}
      \Gamma := \bmat{J & B \\ -B^T & N}, \qquad W := \bmat{R & P \\ P^T & S}
    \end{equation}
    satisfy $\Gamma=-\Gamma^T$ and $W=W^T\geq0$ pointwise.
    \item The gradient of the Hamiltonian satisfies $\pa_x\mc H=E^Tz$ and $\pa_t\mc H=z^Tr$ pointwise.
  \end{enumerate}
  %
%   This definition extends with no changes to the case where $E,J,R,B,P,S,N,r$ are functions of time and state.
  This definition can be extended to the case of weak solutions and state spaces with infinite dimension.
  In particular, this framework can be also used to describe partial-differential-algebraic equations (PDAEs).
  For simplicity, the results and examples presented in this paper will be limited to the finite-dimensional case.
\end{definition}

\begin{remark}
  The definition of pH system often includes extra conditions on the Hamiltonian, e.g.~that it is a convex function, or that it is always non-negative \cite{BeaMXZ18}. While these conditions are not necessary in general, they are often satisfied, and they strengthen the stability properties derived from the pH structure.
\end{remark}

\subsection{Properties}

Port-Hamiltonian descriptor systems as defined in \eqref{eq:pHDAE} satisfy many useful properties.
We list and prove some of these in the following.

\subsubsection{Dissipation inequality} Any port-Hamiltonian system must satisfy some kind of dissipation inequality, expressing its passivity, i.e.~energy cannot be created within the system.

\begin{theorem}[dissipation inequality]
  Let us consider a pHDAE of the form \eqref{eq:pHDAE}. Then the power balance equation (PBE)
  \begin{equation}\label{eq:powerBalanceEq}
    \dd{}{t}\mc H(t,x(t)) = - \bmat{z \\ u}^TW\bmat{z \\ u} + u^Ty
  \end{equation}
  holds along any solution $x$, for any input $u$.
  In particular, the \emph{dissipation inequality}
  \begin{equation}\label{eq:dissIneq}
    \mc H(t_2,x(t_2)) - \mc H(t_1,x(t_1)) \leq \int_{t_1}^{t_2}u(\tau)^Ty(\tau)\td\tau
  \end{equation}
  holds.
\end{theorem}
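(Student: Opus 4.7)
The plan is to first establish the power balance equation \eqref{eq:powerBalanceEq} by differentiating $\mc H$ along a solution and substituting the pHDAE equations, then deduce \eqref{eq:dissIneq} by integration together with $W\geq0$.

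For the first step, I would apply the chain rule and invoke the gradient condition $\pa_x\mc H=E^Tz$, $\pa_t\mc H=z^Tr$ from property (2) to obtain
\begin{equation*}
  \dd{}{t}\mc H(t,x(t)) = \pa_t\mc H + (\pa_x\mc H)^T\dot x = z^T r + z^T E\dot x = z^T(E\dot x + r).
\end{equation*}
Substituting the flow equation from \eqref{eq:pHDAE} and using that $\Gamma=-\Gamma^T$ forces $J=-J^T$ (so $z^TJz=0$), this simplifies to $\dd{}{t}\mc H = -z^TRz + z^T(B-P)u$. I would then expand the right-hand side of \eqref{eq:powerBalanceEq}, substitute $y=(B+P)^Tz+(S-N)u$, and use $u^TNu=0$ (since $N$ is likewise skew-symmetric by $\Gamma=-\Gamma^T$). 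After the $u^TSu$ terms cancel, the surviving cross-terms collapse to
\begin{equation*}
  -\bmat{z \\ u}^T W \bmat{z \\ u} + u^Ty = -z^TRz - 2z^TPu + z^T(B+P)u = -z^TRz + z^T(B-P)u,
\end{equation*}
which matches the previous computation and proves \eqref{eq:powerBalanceEq}.

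For the dissipation inequality \eqref{eq:dissIneq}, I would integrate \eqref{eq:powerBalanceEq} from $t_1$ to $t_2$ and note that $\bmat{z\\u}^T W \bmat{z\\u}\geq0$ pointwise by $W\geq0$; dropping the resulting nonpositive integral term yields the claim. The main ``obstacle'' is purely bookkeeping -- tracking signs and cancellations of the $B$, $P$, $S$, $N$ cross-terms. The underlying principle, that skew-symmetry of $\Gamma$ eliminates the conservative contribution while positive-semidefiniteness of $W$ controls dissipation, is the standard port-Hamiltonian argument, and the generalization to non-quadratic $\mc H$ poses no new difficulty thanks to the abstract gradient condition in property (2).
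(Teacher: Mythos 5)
Your proposal is correct and follows essentially the same route as the paper: chain rule plus the gradient conditions to obtain $\dd{}{t}\mc H = z^T(E\dot x + r)$, then substitution of the dynamics with skew-symmetry eliminating the conservative terms, and finally integration using $W\geq0$. The only difference is cosmetic: the paper carries out the cancellation in one stroke using the aggregated block identity $\bigl[\begin{smallmatrix}E\dot x+r\\0\end{smallmatrix}\bigr]=(\Gamma-W)\bigl[\begin{smallmatrix}z\\u\end{smallmatrix}\bigr]+\bigl[\begin{smallmatrix}0\\y\end{smallmatrix}\bigr]$ with $\Gamma=-\Gamma^T$, whereas you verify the same cancellations blockwise via $z^TJz=0$ and $u^TNu=0$.
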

\begin{proof}
  The pHDAE \eqref{eq:pHDAE} can be written as
  \begin{equation*}
    \bmat{E\dot x+r \\ 0} = (\Gamma-W)\bmat{z \\ u} + \bmat{0 \\ y},
  \end{equation*}
  in particular
  \begin{align*}
    \dd{}{t}\mc H(t,x(t)) &= \pa_t\mc H + \pa_x\mc H^T\dot x = z^T(E\dot x+r)
    = \bmat{z \\ u}^T\bmat{E\dot x+r \\ 0} = \\
    &= \bmat{z \\ u}^T\pset{(\Gamma-W)\bmat{z \\ u} + \bmat{0 \\ y}}
    = -\bmat{z \\ u}^TW\bmat{z \\ u} + u^Ty,
  \end{align*}
  which is \eqref{eq:powerBalanceEq}. By time integration, since $W=W^T\geq0$, we immediately get \eqref{eq:dissIneq}.
\end{proof}

Note that, with no input ($u=0$), if the Hamiltonian $\mc H$ is locally positive-definite in an equilibrium point $x^*$ (up to shifting it by a constant), then $\mc H$ is a Lyapunov function and the system is stable.

\subsubsection{Variable transformations}

This class of pHDAEs is closed under many variable transformations:
\begin{theorem}\label{thm:varTrans}
  Consider a pHDAE of the form \eqref{eq:pHDAE}. Let $\tilde{\mc X}\subseteq\R^{\tilde n}$ be a second state space, let $\tilde{\mc S}:=\mB I\times\tilde{\mc X}$, let $x=\varphi(t,\tilde x)\in\mc C^1(\tilde{\mc S},\mc X)$ be a local diffeomorphism (with respect to $\tilde x$) and let $U\in\mc C(\tilde{\mc S},\R^{\ell,\ell})$ be pointwise invertible.
  Consider the input-output DAE
  \begin{equation}\label{eq:tformPHDAE}
  \begin{split}
    \tilde E\dot{\tilde x} + \tilde r &= (\tilde J-\tilde R)\tilde z + (\tilde B-\tilde P)u, \\
    y &= (\tilde B+\tilde P)^T\tilde z + (S-N)u,
  \end{split}
  \end{equation}
  with $\tilde E=U^T(E\circ\varphi)\pa_{\tilde x}\varphi$, $\tilde J=U^T(J\circ\varphi)U$, $\tilde R=U^T(R\circ\varphi)U$, $\tilde B=U^T(B\circ\varphi)$, $\tilde P=U^T(P\circ\varphi)$, $\tilde z=U^{-1}(z\circ\varphi)$ and $\tilde r=U^T(r\circ\varphi+(E\circ\varphi)\pa_t\varphi)$,
  where we denote $(F\circ\varphi)(t,\tilde x)=F(t,\varphi(t,\tilde x))$ for any $F\in\mc C(\mc S,\cdot)$, and let $\tilde{\mc H}(t,\tilde x):=(\mc H\circ\varphi)(t,\tilde x)$.
  Then \eqref{eq:tformPHDAE} is a pHDAE with Hamiltonian function $\tilde{\mc H}$, and to any solution $(\tilde x,u,y)$ of \eqref{eq:tformPHDAE} corresponds a solution $(x,u,y)$ of \eqref{eq:pHDAE} with $x(t)=\varphi(t,\tilde x(t))$.
  Furthermore, if $\varphi(t,\cdot)$ is a global diffeomorphism for all $t\in\mB I$, then the two systems are equivalent.
\end{theorem}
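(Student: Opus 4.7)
}

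The plan is to perform the change of variables directly in \eqref{eq:pHDAE} and then verify that the resulting system satisfies the two defining properties of a pHDAE. The state substitution $x=\varphi(t,\tilde x)$ gives, by the chain rule, $\dot x = \pa_t\varphi + \pa_{\tilde x}\varphi\,\dot{\tilde x}$. Plugging this into \eqref{eq:pHDAE}, substituting $z\circ\varphi = U\tilde z$, and multiplying the first equation on the left by $U^T$ while leaving the output equation as it is, I expect to recover exactly \eqref{eq:tformPHDAE} with the given expressions for $\tilde E,\tilde r,\tilde J,\tilde R,\tilde B,\tilde P$, using that $(\tilde B+\tilde P)^T = (B\circ\varphi + P\circ\varphi)^T U$. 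This simultaneously shows the correspondence of solutions: any $(\tilde x,u,y)$ solving \eqref{eq:tformPHDAE} yields a solution $(x,u,y)$ of \eqref{eq:pHDAE} with $x(t)=\varphi(t,\tilde x(t))$, because pointwise invertibility of $U$ lets us remove the $U^T$ factor. When $\varphi(t,\cdot)$ is a global diffeomorphism, the inverse assignment $\tilde x(t) = \varphi(t,\cdot)^{-1}(x(t))$ gives the reverse map and yields equivalence.

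Next I would check property~1 (skew/symmetry and positivity). Setting $T:=\bigl[\begin{smallmatrix}U & 0\\ 0 & I\end{smallmatrix}\bigr]$, a short block computation shows
\begin{equation*}
  \tilde\Gamma = T^T(\Gamma\circ\varphi)T, \qquad \tilde W = T^T(W\circ\varphi)T.
\end{equation*}
Skew-symmetry of $\Gamma$ transfers to $\tilde\Gamma$ since $(T^T A T)^T = T^T A^T T$, and the congruence by $T$ preserves symmetry and positive semidefiniteness of $W$, so $\tilde W = \tilde W^T \geq 0$ pointwise.

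Property~2 is the heart of the argument and amounts to two chain-rule identities for $\tilde{\mc H} = \mc H\circ\varphi$. For the spatial gradient,
\begin{equation*}
  \pa_{\tilde x}\tilde{\mc H} = (\pa_{\tilde x}\varphi)^T(\pa_x\mc H)\circ\varphi = (\pa_{\tilde x}\varphi)^T(E\circ\varphi)^T(z\circ\varphi) = \bigl(U^T(E\circ\varphi)\pa_{\tilde x}\varphi\bigr)^T\tilde z = \tilde E^T\tilde z,
\end{equation*}
where I used $z\circ\varphi = U\tilde z$ in the third equality. For the time derivative,
\begin{equation*}
  \pa_t\tilde{\mc H} = (\pa_t\mc H)\circ\varphi + (\pa_x\mc H)^T\circ\varphi\,\pa_t\varphi = (z^T r)\circ\varphi + (z\circ\varphi)^T(E\circ\varphi)\pa_t\varphi = \tilde z^T\tilde r,
\end{equation*}
again by substituting $z\circ\varphi = U\tilde z$ and collecting $U^T$ into the definition of $\tilde r$.

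The main obstacle I anticipate is purely bookkeeping: keeping the composition $\circ\varphi$, the Jacobian $\pa_{\tilde x}\varphi$, and the transpose/inverse patterns of $U$ straight, so that the $U^T$ factors applied to the state equation line up with the $U$ absorbed into $\tilde z$ and the $(\pa_{\tilde x}\varphi)^T$ produced by the chain rule. Once these are tracked carefully, all four identities fall out by inspection and no further analytic work is needed.
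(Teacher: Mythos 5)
Your proposal is correct and follows essentially the same route as the paper's proof: substitute $x=\varphi(t,\tilde x)$, pre-multiply by $U^T$ and insert $UU^{-1}$ before $z$, verify property~1 via the congruence $\tilde W = \bigl[\begin{smallmatrix}U & 0\\ 0 & I\end{smallmatrix}\bigr]^T (W\circ\varphi)\bigl[\begin{smallmatrix}U & 0\\ 0 & I\end{smallmatrix}\bigr]$, and verify property~2 by the same two chain-rule identities. The only cosmetic difference is that you spell out the skew-symmetry of $\tilde\Gamma$ explicitly, which the paper leaves implicit.
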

\begin{proof}
  The transformed DAE system is obtained from \eqref{eq:pHDAE} by setting $x=\varphi(t,\tilde x)$, pre-multiplying with $U^T$ and inserting $UU^{-1}$ in front of $z$ in the first equation.
  It is then clear that if $(\tilde x,u,y)$ is a solution of \eqref{eq:tformPHDAE}, then $(x,u,y)$ is a solution of the original system.
  If $\varphi(t,\cdot)$ is a global diffeomorphism, then we can apply the inverse tranformation $(\varphi^{-1},U^{-1})$, where $\varphi^{-1}$ is to intended as $\varphi(t,\varphi^{-1}(t,x))=x$, and get a solution $(\tilde x,u,y)$ for any solution $(x,u,y)$ of the original system, making the two systems equivalent.
  
  To show that \eqref{eq:tformPHDAE} is still a pHDAE, we must check that conditions 1 and 2 in the definition are satisfied.
  \begin{enumerate}
    \item By substitution, we get
    \begin{equation*}
    \begin{split}
      \tilde W &= \bmat{\tilde R & \tilde P \\ \tilde P^T & S}
      = \bmat{U^T(R\circ\varphi)U & U^T(P\circ\varphi) \\ (P\circ\varphi)^TU & S\circ\varphi} = \\
      &= \bmat{U & 0 \\ 0 & I}^T (W\circ\varphi) \bmat{U & 0 \\ 0 & I} \geq 0.
    \end{split}
    \end{equation*}
%     since positive semidefiniteness is invariant under matrix congruence. %\qedhere
    \item By substitution, we get
    \begin{equation*}
    \begin{split}
      \pd{\tilde{\mc H}}{\tilde x}(t,\tilde x) &= \pd{\varphi}{\tilde x}^T\pset{\pd{\mc H}{x}\circ\varphi}
      = \pd{\varphi}{\tilde x}^T\pset{E^Tz\circ\varphi} = \\
      &= \pd{\varphi}{\tilde x}^T(E^T\circ\varphi)UU^{-1}(z\circ\varphi)
      = \tilde E^T\tilde z
    \end{split}
    \end{equation*}
    and
    \begin{equation*}
    \begin{split}
      \pd{\tilde{\mc H}}{t}(t,\tilde x) &=
      \pd{\mc H}{t}\circ\varphi + \pset{\pd{\mc H}{x}\circ\varphi}^T\pd{\varphi}{t} = \\
      &= z^Tr\circ\varphi + \pset{z^TE\circ\varphi}\pd{\varphi}{t} = \\
      &= (z\circ\varphi)^T\pset{r\circ\varphi + (E\circ\varphi)\pd{\varphi}{t}} = \\
      &= \tilde z^TU^TU^{-T}\tilde r = \tilde z^T\tilde r.
    \end{split}
    \end{equation*}
  \end{enumerate}
  This concludes the proof.
\end{proof}

\subsubsection{Autonomous form} Any DAE can be made autonomous by adding time as a state. In particular, any pHDAE can be made autonomous without destroying the structure:
\begin{equation}\label{eq:pHDAEauto}
  \begin{split}
    \bmat{E & r \\ 0 & 1}\bmat{\dot x \\ \dot t} &=
    \bmat{J-R & 0 \\ 0 & 0}\bmat{z \\ 0} + \bmat{B-P & 0 \\ 0 & 1}\bmat{u \\ 1}, \\
    \bmat{y \\ 0} &= \bmat{B+P & 0 \\ 0 & 1}^T\bmat{z \\ 0} + \bmat{S-N & 0\\0 & 0}\bmat{u \\ 1}.
  \end{split}
\end{equation}%
Note that condition 2 becomes simply $\nabla_{\tilde x}\tilde{\mc H}=\tilde E^T\tilde z$, where the tilde denotes the quantities in the autonomous system.

\subsubsection{Structure-preserving interconnection}

Let us consider two autonomous pHDAEs of the form
\begin{align*}
  E_i\dot x_i &= (J_i-R_i)z_i + (B_i-P_i)u_i, \\
  y_i &= (B_i+P_i)^Tz_i + (S_i-N_i)u_i,
\end{align*}
with Hamiltonian $\mc H_i$, for $i=1,2$, and assume that the aggregated input $u=(u_1,u_2)$ and output $y=(y_1,y_2)$ satisfy a linear interconnection relation $Mu+Ny=0$ for some $M,N\in\fc*{\R^{k,m}}$. Then the aggregated system can be written as a pHDAE of the form
\begin{align*}
  \bmat{E & 0 & 0 \\ 0 & 0 & 0 \\ 0 & 0 & 0 \\ 0 & 0 & 0}\bmat{\dot x \\ \dot{\hat u} \\ \dot{\hat y}} &=
  \bmat{\Gamma-W & \mat{0 & 0 \\ I_m & -M^T}\hskip-5pt \\ \mat{0 & -I_m \\ 0 & M}\hskip-5pt & \mat{0 & -N^T \\ N & 0}\hskip-5pt}
  \bmat{z \\ \hat u \\ \hat y \\ 0}
  + \bmat{0 \\ 0 \\ I_m \\ 0}u, \\
  y &= \hat y,
\end{align*}%
with Hamiltonian $\mc H=\mc H_1+\mc H_2$, where we have introduced new state variables $\hat u,\hat y\in\R^n$ that copy $u,y$, and we aggregated $E=\diag(E_1,E_2)$, $x=(x_1,x_2)$, $z=(z_1,z_2)$, $m=m_1+m_2$, $\Gamma=\Pi\diag(\Gamma_1,\Gamma_2)\Pi^T$ and $W=\Pi\diag(W_1,W_2)\Pi^T$, where $\Pi\in\R^{\ell+m,\ell+m}$ is the permutation matrix
\begin{equation*}
  \Pi = \bmat{I_{\ell_1} & 0 & 0 & 0 \\ 0 & 0 & I_{\ell_2} & 0 \\ 0 & I_{n_1} & 0 & 0 \\ 0 & 0 & 0 & I_{n_2}}.
\end{equation*}
%
% is a permutation matrix.

Note that, in general, we may not be able to reduce the number of inputs and outputs.
If we also assume that the interconnection is energy-preserving (e.g.~if $Mu+Ny=0$ defines a Dirac structure for $(y,u)$), then index reduction \cite{KunM06} and row operations can usually be applied to make the system smaller.

\subsection{Dirac structure}

Port-Hamiltonian systems are ofter described through differential geometric structures known as Dirac structures \cite{SchJ14}. We present first the basic definitions.
\begin{definition}[linear Dirac structure]
  Let $\mc F$ be a linear space and $\mc E:=\mc F^*$ its dual space. Let $\bilp{\cdot,\cdot}$ be the bilinear form on $\mc F\times\mc E$ defined as
  \begin{equation*}
    \bilp{(f_1,e_1),(f_2,e_2)} := \dualp{e_1}{f_2} + \dualp{e_2}{f_1},
  \end{equation*}
  where $\dualp{\cdot}{\cdot}$ denotes the duality pairing.
  A \emph{Dirac structure} on $\mc F\times\mc E$ is then a linear subspace $\mc D\subseteq\mc F\times\mc E$, such that $\mc D=\mc D^{\independent}$. % with respect to $\bilp{\cdot\,,\cdot}$.
\end{definition}
In particular, in finite dimension, one only needs to prove $\dim\mc D=\dim\mc F$ and $\dualp{e}{f}=0$ for all $(f,e)\in\mc D$.
If $(f,e)\in\mc D$, then $f$ and $e$ are called \emph{flow} and \emph{effort}, respectively.
In \cite{SchJ14}, the more general definition of modulated Dirac structure over $\mc X$ is also presented, as a subbundle of $T\mc X\oplus T^*\mc X$, that denotes the Whitney sum between the tangent and cotangent bundles of $\mc X$.
%
% \begin{definition}
%   Given a smooth manifold $\mc X$, a \emph{modulated Dirac structure} over $\mc X$ is a subbundle
%   $\mc D\subseteq T\mc X\times T^*\mc X$ such that, for all $x\in\mc X$,
%   $\mc D_x\subseteq T_x\mc X\times T_x^*\mc X$ is a linear Dirac structure.
% \end{definition}
%
To introduce a Dirac structure for pHDAEs, we need to extend this further.
\begin{definition}
  Consider a state space $\mc X$ and a vector bundle $\mc V$ over $\mc X$ with fibers $\mc V_x$.
  A \emph{Dirac structure} over $\mc V$ is a subbundle $\mc D \subseteq \mc V \oplus \mc V^*$ such that, for all $x\in\mc X$, $\mc D_x\subseteq\mc V_x\times\mc V_x^*$ is a linear Dirac structure.
\end{definition}
Note that modulated Dirac structures are a special case of our definition, where $\mc V=T\mc X$.
To associate a Dirac structure to our pHDAE system, we first prove the following lemma:
\begin{lemma}\label{lem:dirac}
  Let $\mc D\subseteq\mc V\oplus\mc V^*$ be a vector subbundle with fibers defined by
  \begin{equation*}
    \mc D_x=\set{(f,e)\in\mc V_x\times\mc V_x^*:f+J(x)e=0},
  \end{equation*}
  where $J:\mc X\to\mc L(\mc V_x^*,\mc V_x)$ is a skew-symmetric operator.
  Then $\mc D$ is a Dirac structure.
\end{lemma}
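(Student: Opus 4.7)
The plan is to verify the Dirac structure condition fiberwise using the finite-dimensional criterion mentioned just after the definition of linear Dirac structure, namely that it suffices to show $\dim\mc D_x=\dim\mc V_x$ and $\dualp{e}{f}=0$ for every $(f,e)\in\mc D_x$. Since $\mc D$ is given as a subbundle whose fibers are cut out by a linear equation involving $J(x)$, the whole argument reduces to pointwise linear algebra on each fiber, and there is no bundle-theoretic subtlety beyond the assumption that $\mc D$ is already a subbundle.

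First I would fix $x\in\mc X$ and study the fiber $\mc D_x=\{(f,e)\in\mc V_x\times\mc V_x^*:f=-J(x)e\}$. I would exhibit the linear map $\Phi\colon\mc V_x^*\to\mc V_x\times\mc V_x^*$, $e\mapsto(-J(x)e,e)$, observe that its image is exactly $\mc D_x$ and that it is injective (the second component recovers $e$), so $\Phi$ is a linear isomorphism onto $\mc D_x$ and hence $\dim\mc D_x=\dim\mc V_x^*=\dim\mc V_x$. This gives the dimension half of the criterion.

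Next I would check the isotropy condition. For any $(f,e)\in\mc D_x$, substitute $f=-J(x)e$ into the duality pairing to get $\dualp{e}{f}=-\dualp{e}{J(x)e}$. Since $J(x)\in\mc L(\mc V_x^*,\mc V_x)$ is skew-symmetric, the quadratic form $e\mapsto\dualp{e}{J(x)e}$ vanishes identically (this is just the usual $\dualp{e_1}{J(x)e_2}=-\dualp{e_2}{J(x)e_1}$ specialised to $e_1=e_2=e$), so $\dualp{e}{f}=0$ as required. Combined with the dimension count, this shows $\mc D_x=\mc D_x^{\independent}$ in the sense of the bilinear form $\bilp{\cdot,\cdot}$.

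The only mildly delicate point is being precise about what \emph{skew-symmetric} means for an operator $J(x)\colon\mc V_x^*\to\mc V_x$ between a space and its dual; one should state at the outset that skew-symmetry is understood as $\dualp{e_1}{J(x)e_2}=-\dualp{e_2}{J(x)e_1}$ for all $e_1,e_2\in\mc V_x^*$, which is the coordinate-free analogue of $J^T=-J$. Once this is in place, every remaining step is a one-line verification, so I do not anticipate any genuine obstacle; the argument is essentially a coordinate-free reformulation of the classical fact that the graph of a skew-symmetric matrix is a Dirac subspace.
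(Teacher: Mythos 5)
Your proof is correct, but it takes a different route from the paper's. You invoke the finite-dimensional shortcut stated after the definition of a linear Dirac structure (isotropy plus $\dim\mc D_x=\dim\mc V_x$, the latter obtained from the parametrization $e\mapsto(-J(x)e,e)$), whereas the paper verifies $\mc D_x=\mc D_x^{\independent}$ directly: it computes $\bilp{(f,e),(f',e')}=\dualp{e}{f'+J(x)e'}$ for $(f,e)\in\mc D_x$ and shows this vanishes for all such $(f,e)$ exactly when $f'+J(x)e'=0$, by exhibiting a functional $e$ with $\dualp{e}{f'+J(x)e'}=1$ whenever $f'+J(x)e'\neq0$. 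Your argument is shorter and entirely elementary, but it rests on the dimension criterion, which the paper states without proof and which is intrinsically finite-dimensional; the paper's direct computation is self-contained and carries over more readily to the infinite-dimensional settings the authors allude to (where separating a nonzero vector by a functional replaces the dimension count). Your closing remark about pinning down the meaning of skew-symmetry for $J(x)\colon\mc V_x^*\to\mc V_x$ as $\dualp{e_1}{J(x)e_2}=-\dualp{e_2}{J(x)e_1}$ is well taken and is exactly the identity the paper also uses implicitly in the step $\dualp{e'}{f}=-\dualp{e'}{Je}$.
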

\begin{proof}
  For generic $(f,e)\in\mc D_x$ and $(f',e')\in\mc V_x\times\mc V_x^*$,
  \begin{align*}
    \bilp{(f,e),(f',e')} &= \dualp{e}{f'} + \dualp{e'}{f} = \\ 
    &= \dualp{e}{f'}-\dualp{e'}{J e} = \dualp{e}{f'+Je'}.
  \end{align*}
  We show that $(f',e')\in\mc D_x$ if and only if $\dualp{e}{f'+Je'}=0$ for all $(f,e)\in\mc D_x$.
  On one hand, if $(f',e')\in\mc D_x$, then $\dualp{e}{f'+Je'}=0$ holds for any $e\in\mc E$.
  On the other hand, if $(f',e')\notin\mc D_x$, then $f'+Je'\neq0$ and so $\exists e\in\mc E$ such that $\dualp{e}{f'+Je'}=1$, but $(f,e)\in\mc D_x$ with $f=-Je$.
\end{proof}
We can now associate a Dirac structure to our pHDAE system in the following way:
\begin{theorem}\label{thm:diracStructure}
  Given an autonomous pHDAE, let us define the \emph{flow fiber} $\mc V_x=\mc F_x^s\times\mc F_x^p\times\mc F_x^d$ for all $x\in\mc X$, where $\mc F_x^s:= E(x)T_x\mc X\subseteq\R^\ell$ is the storage flow fiber,
  $\mc F_x^p:=\R^m$ is the port flow fiber and $\mc F_x^d:=\R^{\ell+m}$ is the dissipation flow fiber.
  Let us partition $f=(f_s,f_p,f_d)\in\mc V$ and $e=(e_s,e_p,e_d)\in\mc V^*$.
  Then the subbundle $\mc D\subseteq\mc V\oplus\mc V^*$ with
  {
  \begin{align*}
    \mc D_x = \Bigg\{(f,e)\in\mc V_x\times\mc V_x^* \;\Bigg|\;
    f + \bmat{\Gamma(x) & I_{\ell+m} \\ -I_{\ell+m} & 0} e = 0 \Bigg\}    
  \end{align*}}%
  is a Dirac structure over $\mc V$.
  Furthermore, the system of equations
  \begin{equation}\label{eq:feCond}
    \begin{alignedat}{2}
      f_s &= -E(x)\dot x, &\qquad e_s &= z(x), \\
      f_p &= y, &\qquad e_p &= u, \\
      e_d &= -W(x)f_d, &\qquad (f,e)&\in\mc D_x
    \end{alignedat}
  \end{equation}
  is equivalent to the original pHDAE, and $\dualp{e}{f}=0$ represents the power balance equation.
\end{theorem}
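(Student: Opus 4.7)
The plan is to verify the three assertions of the theorem in order: that $\mc D$ is a Dirac structure, that the constraints \eqref{eq:feCond} reproduce the autonomous pHDAE, and that the duality identity $\dualp{e}{f}=0$ is precisely the power balance equation \eqref{eq:powerBalanceEq}.

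For the first assertion, the cleanest route is to apply Lemma~\ref{lem:dirac} fiberwise to the block operator $\mc J(x):=\bmat{\Gamma(x) & I_{\ell+m} \\ -I_{\ell+m} & 0}$. This $\mc J$ is skew-symmetric because $\Gamma^T=-\Gamma$ by definition of a pHDAE and the antidiagonal identity blocks clearly contribute skew-symmetrically. Smoothness of $\Gamma$ in $x$ ensures that the pointwise subspaces $\mc D_x$ assemble into an honest subbundle of $\mc V\oplus\mc V^*$, so the lemma identifies each fiber as a linear Dirac structure.

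For the second assertion, I would split the constraint $f+\mc J e=0$ into its two blocks of size $\ell+m$. The lower block collapses to $f_d=(e_s,e_p)$, so the prescriptions $e_s=z$ and $e_p=u$ from \eqref{eq:feCond} give $f_d=\bmat{z\\u}$ and hence $e_d=-W\bmat{z\\u}$. Plugging these into the upper block produces $\bmat{f_s\\f_p}=-(\Gamma-W)\bmat{z\\u}$, and substituting $f_s=-E\dot x$ and $f_p=y$ together with the block form of $\Gamma-W$ reproduces exactly the two equations of the autonomous pHDAE. Each substitution is reversible, so the equivalence goes both ways.

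For the third assertion, a direct computation in coordinates gives
\[\dualp{e}{f}=e_s^Tf_s+e_p^Tf_p+e_d^Tf_d=-z^TE\dot x+u^Ty-\bmat{z\\u}^TW\bmat{z\\u},\]
where I used $e_d=-Wf_d$ together with $W=W^T$. In the autonomous setting, condition~2 of the pHDAE definition reduces to $\pa_x\mc H=E^Tz$, so that $z^TE\dot x=(\pa_x\mc H)^T\dot x=\dd{\mc H}{t}$. Since every element of a Dirac structure satisfies $\dualp{e}{f}=0$, rearranging the identity above recovers precisely \eqref{eq:powerBalanceEq}. The subtlest point, and the one I expect to require the most care, is the restriction $\mc F_x^s=E(x)T_x\mc X$: the Dirac constraint forces $f_s=-(J-R)z-(B-P)u$, which along trajectories coincides with $-E\dot x$ and therefore lies in $\textup{range}\,E(x)$, but at the purely subbundle level this compatibility must be made explicit to confirm that $\mc D_x$ indeed sits inside $\mc V_x\times\mc V_x^*$. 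Once this bookkeeping is dispatched, the remainder of the argument is pure substitution.
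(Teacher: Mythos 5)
Your proposal is correct and follows essentially the same route as the paper's proof: invoke Lemma~\ref{lem:dirac} for the skew-symmetric block operator, split the constraint $f+\mc Je=0$ into its two $(\ell+m)$-blocks to recover the compact form $\bsmat{E\dot x\\-y}=(\Gamma-W)\bsmat{z\\u}$, and expand $\dualp{e}{f}=0$ termwise into the power balance equation. Your closing remark about verifying that the constrained $f_s$ actually lands in $\mc F_x^s=E(x)T_x\mc X$ is a legitimate point of care that the paper's proof passes over silently, but it does not change the argument.
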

\begin{proof}
  $\mc D$ is a Dirac structure because of Lemma \ref{lem:dirac}.
  Note that the pHDAE can be written in compact form as
  \begin{equation*}
    \bmat{E(x)\dot x \\ -y} = (\Gamma(x)-W(x))\bmat{z(x) \\ u}.
  \end{equation*}
  The condition $(f,e)\in\mc D_x$ can be written as
  \begin{equation*}
    \bmat{-f_s \\ -f_p} = \Gamma(x)e_s + e_p, \qquad
    f_d = (e_s,e_p),
  \end{equation*}
  that together with the conditions \eqref{eq:feCond} is equivalent to
  \begin{equation*}
    \begin{split}
      \bmat{E(x)\dot x \\ -y} &= (\Gamma(x)-W(x))\bmat{z(x) \\ u}, \\
      f &= \pset{-E(x)\dot x,\; y,\; \bmat{z(x) \\ u}}, \\
      e &= \pset{z(x),\; u,\; -W(x)\bmat{z(x) \\ u}},
    \end{split}
  \end{equation*}
  which is exactly the compact form of the pHDAE, plus the definition for the flow and effort.
  Finally, note that the equation $\dualp{e}{f}=0$ can be written as
  \begin{equation*}
    \begin{split}
      0 &= \dualp{z(x)}{-E(x)\dot x} + \dualp{u}{y} + \dualp{-W(x)\bmat{z(x)\\u}}{\bmat{z(x)\\u}} = \\
      &= \dd{}{t}\mc H(x) + y^Tu - \bmat{z(x) \\ u}^TW(x)\bmat{z(x) \\ u},
    \end{split}
  \end{equation*}
  which is the power balance equation.
\end{proof}

% Conceptually, the main difference is that, in the old definitions, the storage variables were restricted to coincide with the state variables.
% To make the algebraic constraints explicit, it was then necessary to introduce new parametrizing state variables $z$.
% In the new definition, it goes the other way round: while our state space $\mc X$ is sufficiently large to represent the constraints of the system, our storage variables live in a (potentially) smaller space.

Note that, if we want to retrieve a pHDAE system from a Dirac structure, the additional conditions \eqref{eq:feCond} and the definition of $\mc H(x)$ are needed.
These can also be lifted to a geometric interpretation, by the means of a Lagrangian submanifold and of a dissipative structure \cite{SchM18}.

\section{Time discretization}

Let us consider a finite-dimensional pHDAE of the form \eqref{eq:pHDAE}.
Under some regularity assumptions \cite{KunM06}, many classes of Runge-Kutta methods can be applied to compute time-discretizations of differential-algebraic equations.
An important class of Runge-Kutta methods is the class of collocation methods.
We extend the results from \cite{KotL18} to pHDAEs, proceeding in a similar way.
% We proceed in a similar way as in \cite{KotL18}, but extending the results to pHDAEs.

\subsection{Collocation methods for DAEs}

Assume that an input function $u:\mB I\times\mc X\to\R^m$, depending on time and possibly space, is given.
Given a time interval $\mB I=[t_0,t_f]$ of length $h=t_f-t_0$, and a consistent initial condition $x_0$ at time $t_0$, we approximate the solution $x(t)$ of the pHDAE on $\mB I$ with a polynomial $\tilde x(t)\in\R[t]_s$, where $\R[t]_s$ denotes the vector space of polynomials of degree at most $s$.
The polynomial is chosen such that $\tilde x(t_0)=x_0$, and that it satisfies the differential-algebraic equation in $s$ collocation points $t_i=t_0+h\gamma_i$ with $\gamma_i\in[0,1]$ for $i=1,\ldots,s$.

Let $\ell_i$ denote the $i$-th Lagrange interpolation polynomial with respect to the nodes $\gamma_1,\ldots,\gamma_s$, i.e.,
\begin{equation*}
  \ell_i(\tau) := \prod_{\substack{j=1\\j\neq i}}^s\frac{\tau-\gamma_j}{\gamma_i-\gamma_j}.
\end{equation*}
Then we can write
\begin{equation*}
  \begin{split}
    \dot{\tilde x}(t_0+\tau h) &= \sum_{i=1}^s\dot x_i\ell_i(\tau), \\
    \tilde x(t_0+\tau h) &= x_0 + h\sum_{j=1}^s\dot x_j\int_0^\tau\ell_j(\sigma)\td\sigma,
  \end{split}
\end{equation*}
for certain $\dot x_i=\dot{\tilde x}(t_i)$ that we have to compute, and also
\begin{align*}
  x_i &:= \tilde x(t_i) = x_0 + h\sum_{j=1}^s\alpha_{ij}\dot x_j, \\
  x_f &:= \tilde x(t_f) = x_0 + h\sum_{j=1}^s\beta_j\dot x_j,
\end{align*}
where $\alpha_{ij}:=\int_0^{\gamma_i}\ell_j(\sigma)\td\sigma$ and $\beta_j:=\int_0^1\ell_j(\sigma)\td\sigma$.
In particular, the constants $\alpha_{ij},\beta_j,\gamma_i\in\R$ for $i,j=1\ldots s$ are the coefficients of the Butcher diagram of the associated Runge-Kutta method \cite{HaiLW06}.

% \begin{definition}
%   Let us consider a state space $\mc X$, an $s$-tuple of vector bundles $\mc F=(\mc F^1,\ldots,\mc F^s)$ over $\mc X$ and its dual bundles $\mc E=(\mc E^1,\ldots,\mc E^s)$.
%   A \emph{time-discrete Dirac structure} over $\mc V$ is then a $s$-tuple $\mc D=(\mc D^1,\ldots,\mc D^s)$ such that $\mc D^i\subseteq\mc F^i\times\mc E^i$ is a Dirac structure over $\mc F^i$ for $i=1,\ldots,s$.
%   This definition can be easily extended to the case where $\mc F,\mc E,\mc D$ are infinite sequences instead than $s$-tuples.
% \end{definition}

\subsection{Dirac structure associated to discretization}

Consider for simplicity the autonomous case.
The non-autonomous case is similar, with a more cumbersome notation.
Let $\mc D_x$ be the Dirac structure associated to \eqref{eq:pHDAEauto} (as in Theorem \ref{thm:diracStructure}).
We define the Dirac structure associated to the time-discretization as $\set{\mc D_{x_i}:i=1,\ldots,s}$, i.e.,
\begin{align*}
  \mc D_{x_i} &=
  \Bigg\{(f^i,e^i)\in\mc V_{x_i}\times\mc V_{x_i}^* \;\Bigg|\;
  f^i +
    \bmat{
        \Gamma(t_i,x_i) & I_{\ell+m} \\
        -I_{\ell+m} & 0
    }
    e^i
    = 0
  \Bigg\},
\end{align*}
with $f^i=(f_s^i,y_i,f_d^i)$ and $e^i=(e_s^i,u_i,e_d^i)$.
Taking $(f^i,e^i)\in\mc D_{x_i}$, together with
\begin{subequations}
\begin{align}
  x_f &= x_0 + h\sum_{i=1}^s\beta_i\dot x_i,    \label{eq:coll1} \\
  x_i &= x_0 + h\sum_{j=1}^s\alpha_{ij}\dot x_j,  \label{eq:coll2} \\
  f_s^i &= -E(x_i)\dot x_i,             \label{eq:coll3} \\
  e_s^i &= z(x_i),          \label{eq:coll4} \\
  e_d^i &= -W(x_i)f_d^i,        \label{eq:coll5} \\
  u_i &= u(x_i),          \label{eq:coll6} 
\end{align}
\end{subequations}
we get a system that is equivalent to applying the collocation method and computing the \emph{discrete input} and \emph{output} $u_i,y_i$, for $i=1,\ldots,s$.
Let us define the \emph{collocation flows, efforts, input and output} in $\R[t]_{s-1}$ as
\begin{alignat*}{2}
  \tilde{f}_s(t_0+h\tau) &= \sum_{i=1}^s f_s^i\ell_i(\tau), \quad&
  \tilde{e}_s(t_0+h\tau) &= \sum_{i=1}^s e_s^i\ell_i(\tau), \\
  \tilde{f}_d(t_0+h\tau) &= \sum_{i=1}^s f_d^i\ell_i(\tau), \quad&
  \tilde{e}_d(t_0+h\tau) &= \sum_{i=1}^s e_d^i\ell_i(\tau), \\
  \tilde y(t_0+h\tau) &= \sum_{i=1}^s y_i\ell_i(\tau), \quad&
  \tilde u(t_0+h\tau) &= \sum_{i=1}^s u_i\ell_i(\tau).
\end{alignat*}
Note that, by construction, $(\tilde{f}_s,\tilde y,\tilde{f}_d;\tilde{e}_s,\tilde u,\tilde{e}_d)\in\mc D_{\tilde x}$ in all collocation points $t_i$.
Let us consider the evolution of the Hamiltonian $\mc H$ along the collocation polynomial $\tilde x(t)$.
In particular, let $H(t):=\mc H(\tilde x(t))$: we can then write
\begin{equation*}
  H(t)-H(t_0) = \int_{t_0}^t\dot{H}(s)\td s.
\end{equation*}
In the collocation points, the PBE is satisfied:
\begin{equation*}
  \begin{split}
    \dot{H}(t_i) &= \nabla\mc H(x_i)^T\dot x_i = z(x_i)^TE(x_i)\dot x_i = \\
    &= -\dualp{e_s^i}{f_s^i} = \dualp{e_d^i}{f_d^i} + \dualp{y_i}{u_i},
  \end{split}
\end{equation*}
for $i=1,\ldots,s$.
In particular, if we apply the quadrature rule associated to the collocation method, we get
\begin{equation*}
  \begin{split}
    & H(t_f) - H(t_0) = h\sum_{j=1}^s\beta_j\dot{H}(t_j) + \mc O(h^{p+1}) = \\
    &\qquad= -h\sum_{j=1}^s\beta_j\dualp{e_s^j}{f_s^j} + \mc O(h^{p+1}) = \\
    &\qquad= h\sum_{j=1}^s\beta_j\dualp{e_d^j}{f_d^j} + h\sum_{j=1}^s\beta_j\dualp{y_j}{u_j} + \mc O(h^{p+1}),
  \end{split}
\end{equation*}
where $p\in\N$ is the degree of exactness of the quadrature rule.
% \begin{equation*}
%   H(t) \approx \sum_{j=1}^s\beta_j\dot{H}(t_j)
%   = -\sum_{j=1}^s\beta_j\dualp{e_s^j}{f_s^j}
%   = \sum_{j=1}^s\beta_j\dualp{e_d^j}{f_d^j} + \sum_{j=1}^s\beta_j\dualp{y_j}{u_j}.
% \end{equation*}
%
We observe that, for the same reason,
\begin{subequations}
  \begin{align}
    \label{eq:discDissInt}
    h\sum_{j=1}^s\beta_j\dualp{e_d^j}{f_d^j} &= \int_{t_0}^{t_f}\dualp{\tilde{e}_d}{\tilde{f}_d} + \mc O(h^{p+1}), \\
    \label{eq:discPortInt}
    h\sum_{j=1}^s\beta_j\dualp{y_j}{u_j} &= \int_{t_0}^{t_f}\dualp{\tilde y}{\tilde u} + \mc O(h^{p+1}),
  \end{align}
\end{subequations}
so
\begin{equation*}
  H(t_f) - H(t_0) = \int_{t_0}^{t_f}\Big(\dualp{\tilde{e}_d(s)}{\tilde{f}_d(s)}
  + \dualp{\tilde y(s)}{\tilde u(s)}\Big)\td s
  + \mc O(h^{p+1}).
\end{equation*}
We note that, if $p\geq 2s-2$, then equations \eqref{eq:discDissInt} and \eqref{eq:discPortInt} are exact.
Furthermore, if $\beta_j\geq0$ for $j=1,\ldots,s$ (and this is the case for many collocation methods), we can deduce that $h\sum_{j=1}^s\dualp{e_d^j}{f_d^j}\leq0$, thus the dissipation component retains its qualitative behaviour.

\subsection{The quadratic Hamiltonian case}\label{ssec:collocation}

Let us consider the case where the Hamiltonian $\mc H(x)$ is a polynomial of degree (at most) 2, i.e.~it can be written as
\begin{equation*}
  \mc H(x) = \frac12x^TQx + v^Tx + c,
\end{equation*}
for some $Q=Q^T\in\R^{n,n}$, $v\in\R^n$ and $c\in\R$.
Since $\tilde x\in\R[t]_s^n$, we also have $H=\mc H\circ\tilde x\in\R[t]_{2s}$ and $\dot{H}\in\R[t]_{2s-1}$.
It is known that the maximum degree of exactness for quadrature rules with $s$ nodes is $2s-1$, and that it is attained only with Gaussian quadrature rules, that are associated with Gauss-Legendre collocation methods.
In particular, if we apply such a method, the integration of $\dot{H}$ will be exact, i.e.
\begin{equation*}
  \begin{split}
    H(t_f)-H(t_0)
    &= h\sum_{j=1}^s\beta_j\dualp{e_d^j}{f_d^j} + h\sum_{j=1}^s\beta_j\dualp{y_j}{u_j} = \\
    &= \int_{t_0}^{t_f}\big(\dualp{\tilde{e}_d(s)}{\tilde{f}_d(s)} + \dualp{\tilde y(s)}{\tilde u(s)}\big)\td s.
  \end{split}
\end{equation*}
In particular, since we always have $\beta_j\geq0$ for Gauss-Legendre collocation, the dissipation term is always non-positive, and we can write the discrete dissipation inequality
\begin{equation*}
  H(t_f)-H(t_0) \leq h\sum_{j=1}^s\beta_j\dualp{y_j}{u_j} = \int_{t_0}^{t_f}\dualp{\tilde y(s)}{\tilde u(s)}\td s.
\end{equation*}
Thus, in the quadratic Hamiltonian case the pH structure is preserved, in the sense that the PBE and the dissipation inequality have an exact discrete version.

\section{Examples}

\subsection{A basic DC power network example}

Let us consider as a toy example the linear electrical circuit represented in Fig.~\ref{fig:circuit},
where $R_G,R_L,R_R>0$ are resistances, $L>0$ is an inductor, $C_1,C_2>0$ are capacitors and $E_G$ is a controlled voltage source.
This can be interpreted as a basic representation of a DC generator ($E_G$,$R_G$), connected to a load ($R_R$) with a transmission line (the $\pi$ model given by $C_1,C_2,L,R_L$).
\begin{figure}[ht]
  \centering
  \begin{circuitikz}[scale=1.5,/tikz/circuitikz/bipoles/length=1cm]
    \draw (0,0) node[ground] {} to[american controlled voltage source,invert,v>=$E_G$] (0,1) to[R=$R_G$,i>=$I_G$] (0,2) to (1,2) to[L=$L$,i>=$I$] (2,2) to[R=$R_L$] (3,2) to (4,2) to[R=$R_R$,i<=$I_R$] (4,0) node[ground] {};
    \draw (1,2) to[C=$C_1$,v>=$V_1$,i<=$I_1$,*-] (1,0) node[ground] {};
    \draw (3,2) to[C,l_=$C_2$,v^>=$V_2$,i<_=$I_2$,*-] (3,0) node[ground] {};
  \end{circuitikz}
  \caption{Basic DC power network example}\label{fig:circuit}
\end{figure}
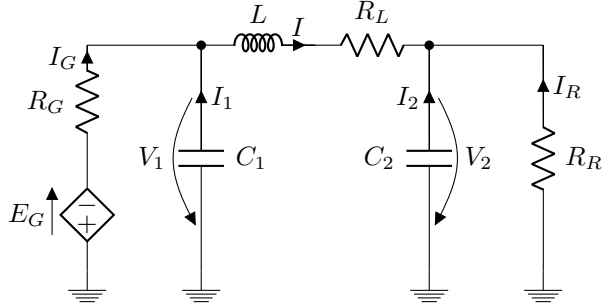
By means of Kirchhoff's circuit laws, this system can be naturally written as the following DAE:
\begin{equation}\label{eq:circuit}
  \begin{split}
    L\dot I &= -R_LI + V_2 - V_1, \\
    C_1\dot V_1 &= I - I_G, \\
    C_2\dot V_2 &= - I - I_R, \\
    0 &= -R_GI_G + V_1 + E_G, \\
    0 &= -R_RI_R + V_2.
  \end{split}
\end{equation}
The energy in the system can be stored in the inductor and in the two capacitors, giving the Hamiltonian
\begin{equation}
  \mc H(I,V_1,V_2) = \frac12LI^2 + \frac12C_1V_1^2 + \frac12C_2V_2^2.
\end{equation}
The system can then be written equivalently as an autonomous pHDAE of the form
\begin{subequations}\label{eq:circuit2}
  \begin{align}
    E\dot x &= (J-R)x + Bu, \\
    y &= B^Tx,
  \end{align}
\end{subequations}
with $x=(I,V_1,V_2,I_G,I_R)$, $E=\diag(L,C_1,C_2,0,0)$, $B=e_4$, $u=E_G$, $y=I_G$ and
\begin{equation*}
  J = \bmat{0 & -1 & 1 & 0 & 0 \\ 1 & 0 & 0 & -1 & 0 \\ -1 & 0 & 0 & 0 & -1 \\ 0 & 1 & 0 & 0 & 0 \\ 0 & 0 & 1 & 0 & 0}, \quad
  R = \bmat{R_L & 0 & 0 & 0 & 0 \\ 0 & 0 & 0 & 0 & 0 \\ 0 & 0 & 0 & 0 & 0 \\ 0 & 0 & 0 & R_G & 0 \\ 0 & 0 & 0 & 0 & R_R}.
\end{equation*}
The power balance equation reads as
\begin{equation*}
  \dot{\mc H} = -R_LI^2-R_GI_G^2-R_RI_R^2+I_GE_G;
\end{equation*}
in particular, if we shut down the generator ($E_G=0$), the Hamiltonian will decrease and converge to a solution such that $\dot{\mc H}=0$, that is, $I=I_G=I_R=0$.
The only state compatible with \eqref{eq:circuit} that satisfies that condition is $x=0$, so the system will always converge to that asymptotically stable point.

\subsection{Controlling the circuit}

Suppose now that $R_R$ represents a consumer, that requires a fixed amount of power $P=R_RI_R^2$ to be delivered to them.
We would like then to control the voltage of the generator $E_G$, so that the state of the system will converge to $I_R=I_R^*:=-\sqrt{P/R_R}$.
If we assume that a solution with $I_R\equiv I_R^*$ exists, then we also get $I=I_G\equiv-I_R^*$, $V_1\equiv(R_R+R_L)I_R^*$, $V_2\equiv R_RI_R^*$ and $E_G\equiv-(R_R+R_L+R_G)I_R^*$.

This can be interpreted in the following way, exploiting the port-Hamiltonian framework: let
\begin{equation*}
  x^* = \bmat{I^* \\ V_1^* \\ V_2^* \\ I_G^* \\ I_R^*} :=
  \sqrt{\frac{P}{R_R}}\bmat{1 \\ -R_R-R_L \\ -R_R \\ 1 \\ -1}
\end{equation*}
denote the desired state. By applying the change of variables $\tilde x=x-x^*$ to \eqref{eq:circuit2}, we get the equivalent pHDAE
\begin{subequations}\label{eq:circuit3}
  \begin{align}
    E\dot{\tilde x} &= (J-R)(\tilde x+x^*) + e_4u, \\
    y &= e_4^T(\tilde x+x^*),
  \end{align}
\end{subequations}
with the same Hamiltonian.
Since our goal is having $\tilde x\equiv0$ as an asymptotically stable solution, and $(J-R)x^*=-\sqrt{P/R_R}(R_R+R_L+R_G)e_4=:-e_4u^*$, by construction,
we write the equivalent system
\begin{subequations}\label{eq:circuit4}
  \begin{align}
    E\dot{\tilde x} &= (J-R)\tilde x + e_4\tilde u, \\
    \tilde y &= e_4^T\tilde x,
  \end{align}
\end{subequations}
with $\tilde u=u-u^*$ and $\tilde y=y-e_4^Tx^*=I_G-I_G^*$, which is again a pHDAE, but with Hamiltonian
\begin{equation}
  \tilde{\mc H}=\frac12L(I-I^*)^2+\frac12C_1(V_1-V_1^*)^2+\frac12C_2(V_2-V_2^*)^2.
\end{equation}
As before, this shows that choosing $\tilde u=0$ (i.e.~$u=-u^*$) would make the system converge to the desired state, that will be asymptotically stable.
On the other hand, this is not the only input that would satisfy this goal.
If we want to speed up the convergence, we have to increase the dissipation.
To do so, we can for example apply some linear feedback of the form $\tilde u=-\alpha\tilde y$ for some $\alpha>0$ (i.e.~$u=u^*-\alpha(I_G-I_G^*)$):
in this way, the dissipation inequality can be strengthened, since
\begin{align*}
  \dot{\tilde{\mc H}} &= -R_L\tilde I^2-(R_G+\alpha)\tilde I_G^2-R\tilde I_R^2 \leq \\
  &\leq -R_L\tilde I^2-R_G\tilde I_G^2-R\tilde I_R^2.
\end{align*}
From another point of view, if we apply a feedback of the form $u=u^*-\alpha(y-I_G^*)+\hat u$, then \eqref{eq:circuit4} can be written as
\begin{subequations}\label{eq:circuit5}
  \begin{align}
    E\dot{\tilde x} &= (J-R_\alpha)\tilde x + e_4\hat u, \\
    \tilde y &= e_4^T\tilde x,
  \end{align}
\end{subequations}
with $R_\alpha=R+\alpha e_4e_4^T>R$.

\subsection{Numerical simulations}

We present numerical simulations on the toy example \eqref{eq:circuit}.
We choose as constants $L=2$, $C_1=0.01$, $C_2=0.02$, $R_L=0.1$, $R_G=6$ and $R_R=3$.
This choice is not based on real world data, but it is made to reflect the usual assumption that, for transmission lines, $L\gg R_L\gg C$.
We discretize the system with respect to time, using the midpoint rule, which is the Gauss-Legendre collocation method with $s=1$ stages and order $p=2$ for ODEs.
Since \eqref{eq:circuit} is a semi-explicit DAE with index 1, the chosen method will also give convergence order 2 (see \cite[Theorem 5.16]{KunM06}).

First, we apply the time discretization to a system without control ($E_G=0$), starting from consistent non-zero initial values (see Fig.~\ref{fig:NCevo}).
The evolution of the state and of the Hamiltonian show the expected qualitative behaviour: after some time, they all converge to zero. Furthermore, while in the first half of the simulation the state oscillates, the Hamiltonian always decreases monotonically, following the discrete dissipation inequality.

\begin{figure}[ht]
  \centering
  \includegraphics[width=0.7\linewidth]{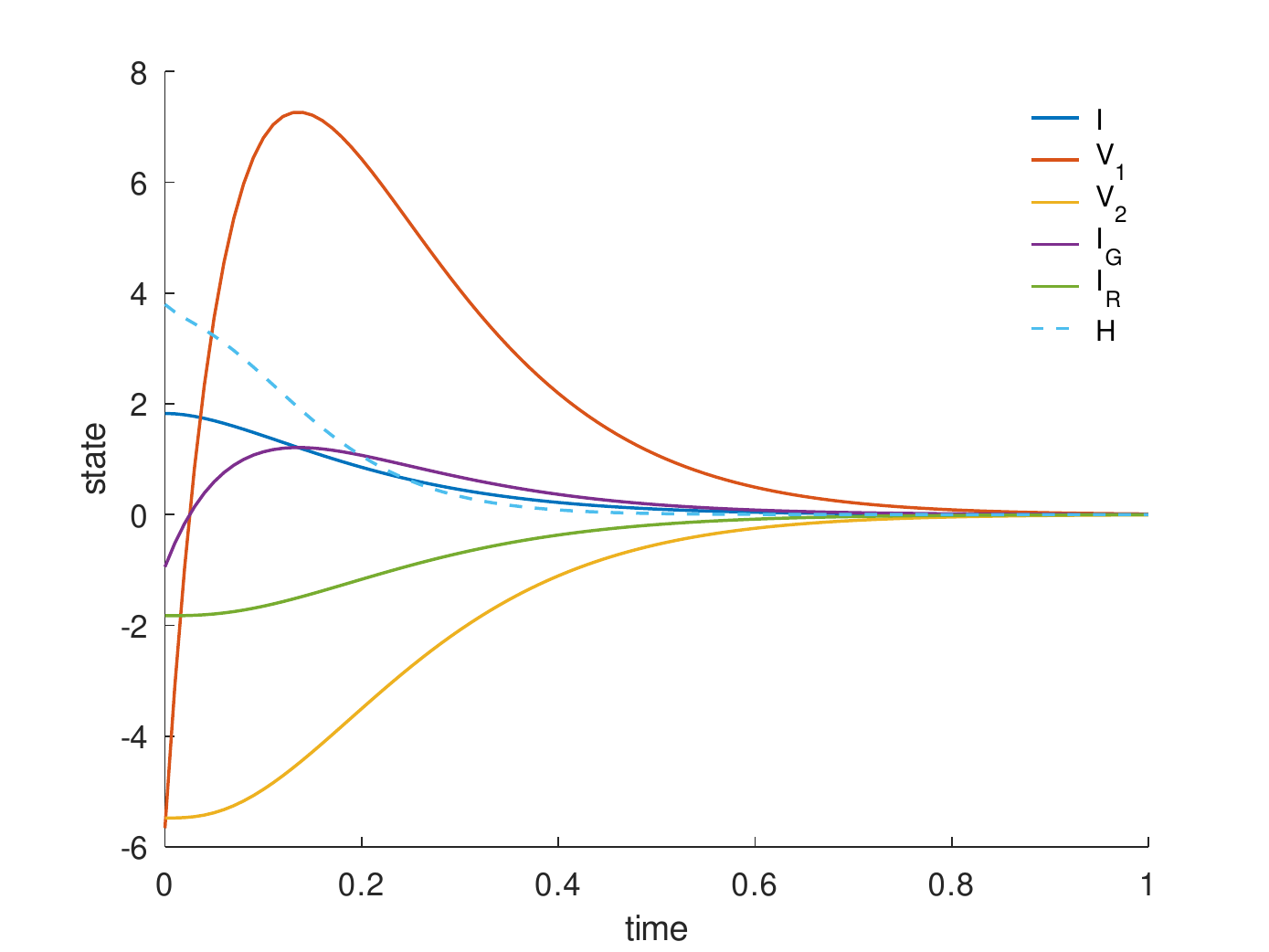}
  \caption{Evolution of state and energy with no control. The solid lines represent the state. The dashed line represents the Hamiltonian $\mc H$.}
  \label{fig:NCevo}
\end{figure}

As a second example, we apply control to the system, starting from almost-zero consistent initial values, with the goal of delivering $P=10$ power to the consumer.
This corresponds to the desired state
\begin{equation*}
  x^* = \bmat{I^* \\ V_1^* \\ V_2^* \\ I_G^* \\ I_R^*} :=
  \sqrt{\frac{10}{3}}\bmat{1 \\ -3.1 \\ -3 \\ 1 \\ -1}
  \approx \bset{
  \begin{array}{@{}r@{}}
    1.8257 \\ -5.6598 \\ -5.4772 \\ 1.8257 \\ -1.8257
  \end{array}
  },
\end{equation*}
and to the control value $u^*=9.1\sqrt{10/3}\approx 16.614$.
To simulate the fact that the change of input does not happen instantly, we actually choose as control
\begin{equation*}
  u(t) = u^*\Big(\arctan\big(5(t-0.5)\big)+0.5\Big),
\end{equation*}
so that the input increases smoothly and rapidly from 0 to $u^*$.
In Fig.~\ref{fig:Cevo} one can see that, after some initial oscillations, the state quickly converges to the desired values, represented by the dotted lines. At the same time, the second Hamiltonian $\tilde{\mc H}$ decreases monotonically, converging to zero, while the first Hamiltonian $\mc H$, representing actual energy in the system, converges to a positive value $\mc H^*$.

\begin{figure}[ht]
  \centering
  \includegraphics[width=0.7\linewidth]{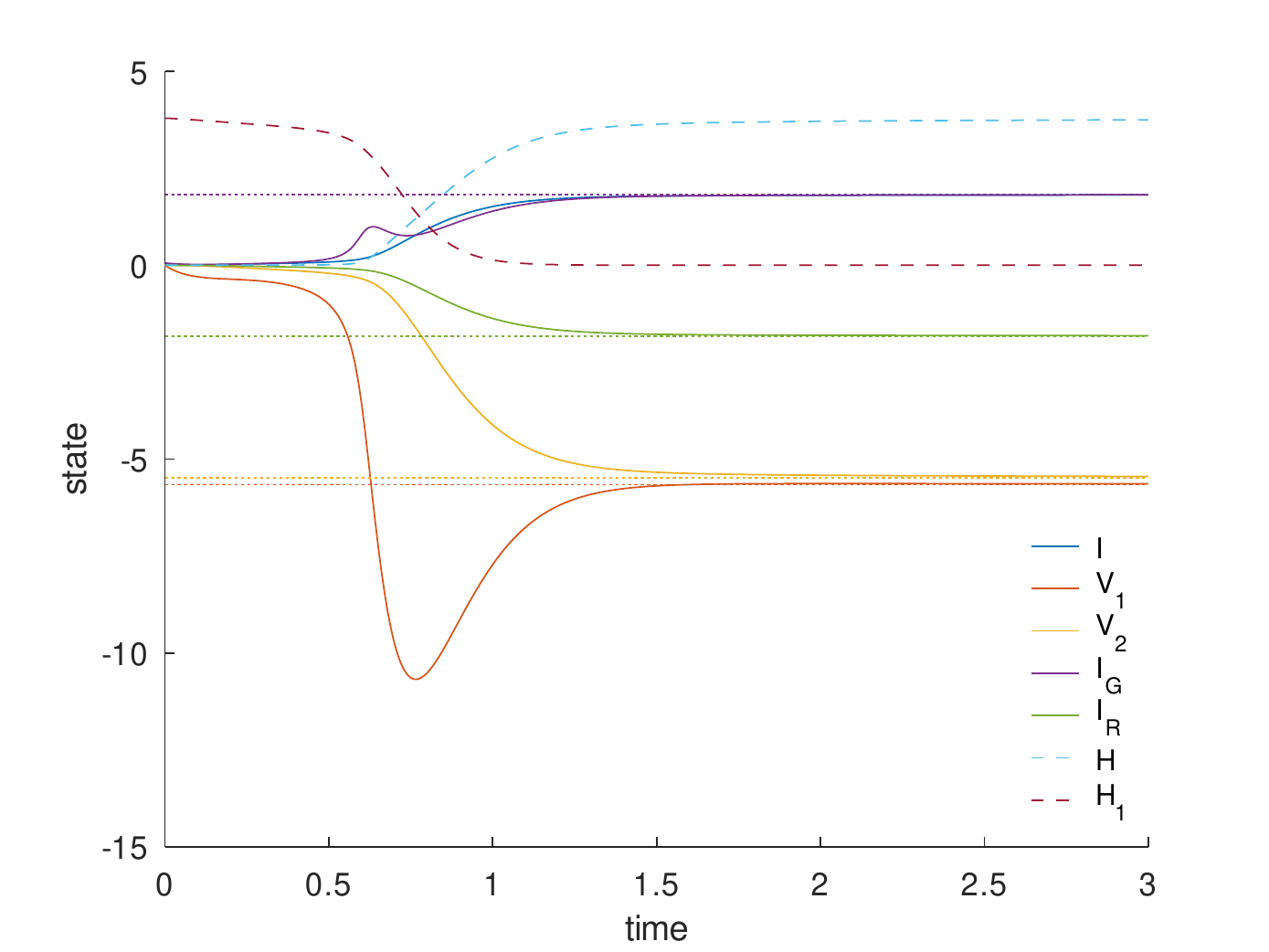}
  \caption{Evolution of state and energy with control. The solid lines represent the state of the system. The dotted lines represent the desired state. The cyan and purple dashed lines (H and H\textsubscript{1}) represent the first and second Hamiltonian $\mc H$ and $\tilde{\mc H}$, respectively.}
  \label{fig:Cevo}
\end{figure}

%%%%%%%%%%%%%%%%%%%%%%%%%%%%%%%%%%%%%%%%%%%%%%%%%%%%%%%%%%%%%%%%%%%%%%%%%%%%%%%%
\section{Conclusions and future works}

\subsection{Conclusions}

We have presented a new definition for port-Hamiltonian descriptor systems, generalizing the one from \cite{BeaMXZ18} to include a larger class of equations. Extension to weak problems and to partial differential equations is also possible.
We verified that this definition satisfies several properties, in particular a dissipation inequality, invariance under a large class of variable transformations, and structure-preserving interconnection.
We generalized the definition of Dirac structure and we associated one to every system included in our formulation.
We extended the results from \cite{KotL18} to apply structure-preserving collocation schemes to port-Hamiltonian differential-algebraic equations.
Finally, we illustrated a simple example from the domain of electrical circuits that can be written with our formulation, and we presented some numerical experiments.

\subsection{Future Works}

Ongoing work is on the analysis of a larger class of numerical methods applied to pHDAEs, including more general Runge-Kutta schemes and partitioned methods, in particular in the case of semi-explicit DAEs with index 1.
Structure-preserving model reduction and space discretization for PDEs are also being considered.
Future work will include further analysis of the properties of pHDAEs, in particular the extension of the results from \cite{MehMW18} and possibly the characterization of controllability and observability of pH systems.

%%%%%%%%%%%%%%%%%%%%%%%%%%%%%%%%%%%%%%%%%%%%%%%%%%%%%%%%%%%%%%%%%%%%%%%%%%%%%%%%
\section{Acknowledgments}

Riccardo Morandin is supported by the Deutsche Forschungsgemeinschaft (DFG) in the subproject B03, within the SFB Transregio 154.
Volker Mehrmann is supported by the German Federal Ministry of Education and Research (BMBF), in the project EiFer.

% This work was supported by TRR-154: Mathematical Modelling, Simulation and Optimization using the Example of Gas Networks

%%%%%%%%%%%%%%%%%%%%%%%%%%%%%%%%%%%%%%%%%%%%%%%%%%%%%%%%%%%%%%%%%%%%%%%%%%%%%%%%

\bibliographystyle{plain}
\bibliography{MehM19}

\nocite{*}

\end{document}